\tikzstyle{circleNode} = [circle, draw, minimum size=1.5em]
\tikzstyle{arrow} = [->, thick]
\newtheorem{lemma}{Lemma}[section]
\newtheorem{thm}[lemma]{Theorem}
\newtheorem{corollary}[lemma]{Corollary}
\newtheorem{remark}{Remark}
\newtheorem{example}{Example}[section]
\newcommand{\R}{\mathbb{R}}
\newcommand{\N}{\mathbb{N}}
\newcommand{\Z}{\mathbb{Z}}
\newcommand{\RM}{\mathcal{R}}
\title{The Maki-Thompson model with random awareness}
\author{Cristian F. Coletti, Denis A. Luiz and Alejandra Rada}
\begin{document}

\maketitle

\begin{abstract}
    We propose a rumor propagation model in which individuals within a homogeneously mixed population can assume one of infinitely many possible states. To analyze this model, we extend the classical law of large numbers for density-dependent population models in $\mathbb{R}^d$ (Ethier \& Kurtz in 2005, \cite{ethier2009markov}) to an infinite--dimensional setting. Specifically, we prove a convergence result for stochastic processes in the space of $p$-summable real sequences, generalizing the finite-dimensional theory. We apply this framework to an infinite-dimensional continuous-time Markov chain that can be seen as a generalization of the Maki--Thompson model, where each ignorant individual becomes a spreader only after hearing the rumor a random number of times. We derive the asymptotic proportions of individuals in each state at the end of a rumor outbreak. Furthermore, we characterize the maximum proportion of individuals actively spreading the rumor and explore conditions under which the model exhibits waves of rumor propagation.

\end{abstract}

\section{Introduction}\label{S:intro}

In the 1950s and 1960s, the phenomena of rumor spreading bought the attention of mathematicians given its relation with the mechanisms of disease diffusion \cite{goffman1964generalization,rapoport1952mathematical}. In 1964, Daley and Kendal introduced the first mathematical model specific for rumor spreading, \cite{daley1965stochastic}, which gained widespread acceptance in the mathematical community in subsequent years \cite{dunstan1982rumour,lefevre1994distribution,pittel1990daley,sudbury1985proportion}.

The Daley--Kendall model (DK model) considers a population with fixed size, say $n$, divided into three disjoint classes (ignorants, spreaders and stiflers) and the interactions between these individuals change their roles: the interaction ignorant--spreader makes the ignorant become spreader, the interaction spreader--spreader makes both of them become stifler and the spreader--stifler interaction makes the spreader become stifler. This dynamics is driven by a continuous--time Markov chain (CTMC) in a subset of $\Z^3$, namely in $\Omega^{(n)}=\{(z_1,z_2,z_3)\in\Z^3:z_1,z_2,z_3\geq0,z_1+z_2+z_3=n\}$, where the coordinates denote the number of individuals in each class: ignorants, spreaders and stiflers, respectively. The transitions and probabilities of the CTMC in an interval $(t,t+\Delta t)$ are given by
\begin{equation}\label{DKtransition}
\begin{array}{ccc}
\text{transition} \quad &\text{probability} \\[0.2cm]
(z_1,z_2,z_3)\to(z_1-1,z_2+1,z_3) &z_1z_2 \Delta t+o(\Delta t),\\
(z_1,z_2,z_3)\to(z_1,z_2-2,z_3+2) &\binom{z_2}{2}\Delta t+o(\Delta t),\\
(z_1,z_2,z_3)\to(z_1,z_2-1,z_3+1) &z_2z_3\Delta t+o(\Delta t).
\end{array}
\end{equation}

Considering  the increments rather than transitions, the relations in \eqref{DKtransition} are equivalent to
\begin{equation}\label{DKincrements}
\begin{array}{ccc}
\text{increment} \quad &\text{probability} \\[0.2cm]
(-1,1,0) &z_1z_2 \Delta t+o(\Delta t),\\
(0,-2,2) &\binom{z_2}{2}\Delta t+o(\Delta t),\\
(0,-1,1) &z_2z_3\Delta t+o(\Delta t).
\end{array}
\end{equation}

An alternative to the Daley–Kendall model was proposed by Maki and Thompson in their 1973 book \cite{MR0366359}, where they simplified the original stochastic framework by assuming that spreaders become stiflers only upon contact with other spreaders or stiflers. The CTMC of Maki--Thompson model (MT model) has the following increments and probabilities
\begin{equation*}
\begin{array}{ccc}
\text{increment} \quad &\text{probability} \\[0.2cm]
(-1,1,0) &z_1z_2 \Delta t+o(\Delta t),\\
(0,-1,1) &(n-z_1)z_2\Delta t+o(\Delta t).
\end{array}
\end{equation*}

The MT model also became important to the theory and is still considered in the literature \cite{belen2011classical,nekovee2007theory,xiao2019rumor,zhang2022dynamics}.

A generalization of the MT model proposed in \cite{rada2021role}, called \textit{$k$-spreading Maki--Thompson model} ($k$--MT model) is represented in Figure \ref{kMTdiagram}. It is defined subdividing a population of size $n$ into $k+2$ classes of individuals: ignorants, $i$-aware individuals with $i\in\{1,\ldots,k-1\}$, spreaders and stiflers. For $i<k$, an $i$-aware individual has heard the rumor exactly $i$ times without spreading it, while a spreader actively disseminates the rumor after $k$ exposures. A stifler is a former spreader who has ceased to spread the rumor. For $t\geq 0$, we denote the number of individuals in each of these classes at time $t$ by $X^{n}(t)$, $Y^{n}_i(t)$, $Y^{n}(t)$, and $Z^{n}(t)$, respectively.

Thus, the $k$--MT model is the $(k+1)$-dimensional CTMC $\big\{\big(X^{n},Y^{n}_1, \ldots,\allowbreak Y^{n}_{k-1}, Y^{n}\big)(t)\big\}_{t\in[0,\infty)}$ with the following increments and rates:
\begin{equation}\label{EQ:transitions}
{\allowdisplaybreaks
\begin{array}{ccc}
\text{increment} \quad &\text{probability} & \\[0.2cm]
- {\bf e}_1 + {\bf e}_2 \quad &X Y\Delta t+o(\Delta t),& \\[0.2cm]
- {\bf e}_{i} + {\bf e}_{i+1} \quad &Y_i Y\Delta t+o(\Delta t),& 2\leq i\leq k, \\[0.2cm]
- {\bf e}_{k+1} \quad &\left(n-1-X-\displaystyle\sum_{i=1}^{k-1}Y_i \right) Y\Delta t+o(\Delta t),&
\end{array}}%
\end{equation}
where $\{{\bf e}_1, {\bf e}_2,\ldots,{\bf e}_{k+1}\}$ is the natural basis of the $(k+1)$-dimensional Euclidean space.  

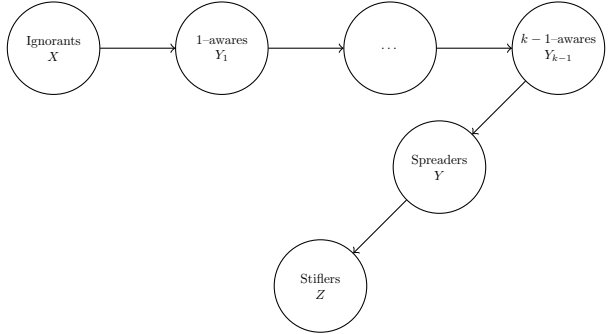
\begin{figure}[H]
    \centering
    \begin{tikzpicture}[scale=0.4, node distance=1cm, every node/.style={circle, minimum size=1.5em, align=center, scale=0.5}]
    \node[draw, minimum height=4em, minimum width=7em] (X) {Ignorants \\$X$};
    \node[draw, minimum height=4em, minimum width=7em, right=of X] (Y1) {$1$--awares \\ $Y_1$};
    \node[draw, minimum height=4em, minimum width=7em, right=of Y1] (Ydots) {$\dots$};
    \node[draw, minimum height=4em, minimum width=7em, right=of Ydots] (Yk1) {$k-1$--awares \\ $Y_{k-1}$};
    \node[draw, minimum height=4em, minimum width=7em, below left=of Yk1] (Y) {Spreaders \\ $Y$};
    \node[draw, minimum height=4em, minimum width=7em, below left=of Y] (Z) {Stiflers \\ $Z$};

    \draw[->] (X) -- (Y1);
    \draw[->] (Y1) -- (Ydots);
    \draw[->] (Ydots) -- (Yk1);
    \draw[->] (Yk1) -- (Y);
    \draw[->] (Y) -- (Z);

    \end{tikzpicture}%
    
    \caption{Scheme of transitions for the $k$-spreading Maki--Thompson model}
    \label{kMTdiagram}
\end{figure}

We propose a generalization of the $k$--MT model by considering $k$ random. That is, a model of which every individual has its own number of times to hear the rumor until spread it. We also add the possibility of an ignorant individual becoming a stifler after hearing the rumor for the first time.

The idea of random dissemination of a rumor in a multi-state population was explored in \cite{lebensztayn2011behaviour}, supposing that every spreader individual has a random number of times to spread the rumor before stopping.  In order to handle with this randomness, the authors embed the infinite-dimensional process in a $2$-dimensional space. In contrast, we handle the infinity of states with a novel technique, as in the next section.

\subsection{Waves of propagation}

In \cite{oliveira2020limit} the authors study the so-called widespread public awareness, which in the rumor context would be the maximum proportion of spreader individuals, that is, the highest peak of the outbreak. Recently, in 2025, the authors in \cite{lebensztayn2025maximum} considered the maximum proportion of spreaders for variants of the Daley--Kendall and Maki--Thompson models. 

Studying the maximum proportion of spreader individuals in our model, we observe that with appropriate choices of parameters, there may be more than one peak of dissemination in the lifetime of the rumor outbreak. That is, for some parameters, there is a moment in which the outbreak may look like it is ceasing, but another peak of propagation is forthcoming without any out-of-model interference (see Examples \ref{p04} and \ref{p06}).

Note that the system of ordinary differential equations for the DK model predicts the existence of one peak until it ends \cite[Ch~11]{ethier2009markov}:
\begin{eqnarray*}
    x'&=&-yx\\
    y'&=&yx-y\bigg(1-x-\frac{y(y-1)}{2}\bigg)
\end{eqnarray*}
may have its time accelerated (as in \cite{lebensztayn2011limit,rada2021role}) to obtain
\begin{eqnarray*}
    \tilde{x}'&=&-\tilde{x}\\
    \tilde{y}'&=&2\tilde{x}+\frac{\tilde{y}(\tilde{y}-1)}{2}-1,
\end{eqnarray*}
which has one local maximum for $\tilde{y}$ for $t\geq0$ and initial condition in $\triangle:=\{(\tilde{x},\tilde{y})\in[0,1]^2:\tilde{x}+\tilde{y}\leq1\}$.

For the MT model we also observe the existence of only one peak, as it can be seen in the accelerated--time system of ordinary differential equations:
\begin{eqnarray*}
    \tilde{x}'&=&-\tilde{x}\\
    \tilde{y}'&=&2\tilde{x}-1.
\end{eqnarray*}

For the rumor model proposed in \cite{lebensztayn2011limit}, the proportion of spreader individuals also has a unique local maximum, since the derivative of the (continuously differentiable) function of proportion of spreader individuals has only one zero with the second derivative negative.

Furthermore, we show that the proportion of spreader individuals in the $k$--MT model, \cite{rada2021role}, has only one maximum for any parameter $k$ chosen (see Corollary \ref{kMTmaximum}).

\subsection{Density-dependent models in $\ell^p(\R)$ space}\label{IDDD}

Although we derive several results on the limit proportions of rumor models, we believe that the main contribution of this paper is not limited to the literature on rumor spreading models. 

In fact, the theory of density-dependent population models is a powerful tool used to derive many rigorous results on rumor spreading models \cite{lebensztayn2011limit,lebensztayn2011behaviour,rada2021role}. We emphasize that we cannot apply this technique to our setup, as the dynamics of our model cannot be embedded into $\Z^d$ for any $d\geq1$. In order to address this problem, we establish a novel functional law of large numbers for an infinite--dimensional version of the former density-dependent population model proposed by Ethier and Kurtz in \cite[Ch.~11]{ethier2009markov}. 

As the size of a population is always finite, if we classify the individuals in a population into countably infinite classes or states, we may see the state of the population as a quasi-null sequence of positive integers, with every coordinate of the sequence being the number of individuals assuming the related class. Immediately, we see that the sequence itself is $p$-summable for any $p>0$, so we may consider the dynamics of the process in the space of $p$-summable sequences of real numbers, $\ell^p(\R)$.

Provided that our process lies in the space $\ell^p(\R)$ and, as we shall see, the set of possible transitions is countable, we may establish a general functional law of large numbers for this setup. As a corollary, we obtain all of our results on rumor spreading models. It is worth mentioning that this result has the former as a particular case.

\subsection{Organization of the paper}

In Section \ref{S:model}, we give the notations and basic definitions, formally define the model, and state the results for the model. In Section \ref{S:examples}, we exhibit two examples with multiple waves of propagation, after that, we apply the results to some known probability distributions and discuss some consequences therein. In Section \ref{S:proofs} we prove our results: in \ref{llnlp} we establish the law of large numbers for processes in $\ell^p(\R)$; and in \ref{proof2.1} and \ref{proof2.2} we prove the results for our model.

\section{Notation, Model and Results}\label{S:model}
\subsection{Notation}
We set $\N_\ast:=\{0,1,\dots\}$ and $\R_+$ as the nonnegative real numbers. For a metric space $\mathbb{X}$, a function $f:\R_+\!\to\mathbb{X}$ is said to be a càdlàg if it is right-continuous with left limits.

\subsection{Definition of the model}

Consider a closed, homogeneously mixed population with $n$ individuals. Any individual will belong to one of the following classes:
 \textbf{spreaders} (those who spread the rumor); \textbf{stiflers} (those who already know the rumor and will not spread it); and the \textbf{$k$-listeners}, $k\in\N$, (who have heard the rumor $k-1$ times without spreading it yet).

For $t\geq 0$, the number of spreaders, $k$-listeners, and stiflers at time $t$ is denoted by $Y^{n}(t)$, $X_k^{n}(t)$, and $Z^{n}(t)$, respectively. On some occasions, the super index $n$ will be suppressed to avoid reloading the notation. Since the population size is $n$, it holds $Z=n-\sum_{k\geq1}X_k-Y$.

Set $p_0$ as the probability that an individual is anti-gossip, that is, someone who if eventually listens to the rumor will never spread it. For $i\geq1$, let $p_i$ denote the probability that an individual who has heard the rumor exactly $i$ times subsequently spreads it. Moreover, we define $q_1=p_1$ and for $i\geq2$, let $q_i=p_i/\sum_{k\geq i}p_k$ be the probability that an individual who has already heard the rumor $i-1$ times becomes spreader after the next contact with someone who tells the rumor once more.

The Maki-Thompson model with random awareness (MT--RA model) is a sequence of infinite-dimensional Markov Chain $\RM^n(t)=\{(Y^n(t),X_1^n(t),X_2^n(t),\allowbreak\dots\}_{t\geq0}$ on $\N_\ast^{\N_\ast}$ with transitions:
    \begin{equation}\label{ratesMTRA}
        \begin{array}{clr}
             \text{Transition}&\text{Rate}&  \\
            -e_{1} & p_0X_1Y,& \\
            -e_{1}+e_{2} & (1-p_0-p_1)X_1Y,& \\
            -e_{i}+e_{0} & q_{i}X_iY,& i\geq1\\
            -e_{i}+e_{i+1}& (1-q_{i})X_iY,& i\geq1\\
            -e_{0} & \left(n-1-\sum_{i=1}^{\infty}X_i\right)Y.&
        \end{array}
    \end{equation}

    A representation of the MT--RA model is presented in Figure \ref{diagram}.
    
\begin{figure}[H]
    \centering
    \begin{tikzpicture}[scale=0.6, node distance=1cm, every node/.style={circle, minimum size=1.5em, align=center, scale=0.6}]
\node[draw, minimum height=2em, minimum width=4em] (X1) {$1$--listeners \\$X_1$};
\node[draw, minimum height=2em, minimum width=4em, right=of X1] (X2) {$2$--listeners \\$X_2$};
\node[draw, minimum height=2em, minimum width=4em, right=of X2] (X3) {$3$--listeners \\$X_3$};
\node[draw, minimum height=2em, minimum width=4em, right=of X3] (X4) {$4$--listeners \\$X_4$};
\node[draw, minimum height=5.5em, minimum width=4em, right=of X4] (Xdots) {$\dots$};
\node[draw, minimum height=2em, minimum width=4em, below=of X2] (Y) {Spreaders \\$Y$};
\node[draw, minimum height=2em, minimum width=4em, below left=of Y] (Z) {Stiflers \\$Z$};

\draw[->] (X1) -- (X2);
\draw[->] (X2) -- (X3);
\draw[->] (X3) -- (X4);
\draw[->] (X4) -- (Xdots);
\draw[->] (X1) -- (Y);
\draw[->] (X2) -- (Y);
\draw[->] (X3) -- (Y);
\draw[->] (X4) -- (Y);
\draw[->] (Xdots) to [out=210,in=10] (Y);
\draw[->] (X1) -- (Z);
\draw[->] (Y) -- (Z);

\end{tikzpicture}
    \caption{Scheme of transitions of the model}
    \label{diagram}
\end{figure}
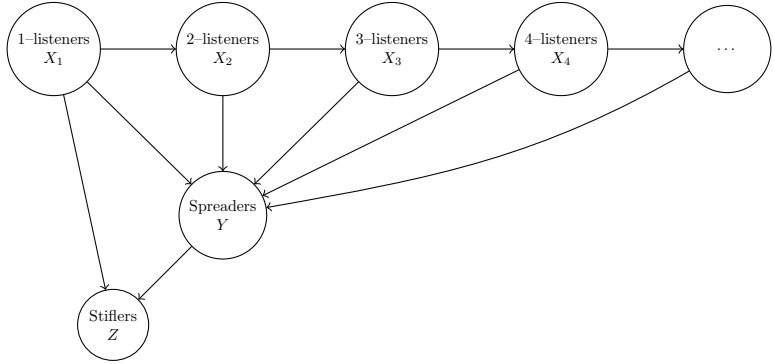

The MT--RA model is said to have initial condition $x^0$ if the following limit holds almost surely.
\begin{equation*}
    \lim_{n\to\infty}\frac{1}{n}(Y^n(0),X_1^n(0),X_2^n(0),\dots)=x^0.
\end{equation*}
The initial condition $x^0=(0,1,0,\dots)$ will be called the standard configuration. 

\subsection{Results}
Let $\tau_n:=\inf\{t:Y^n(t)<0\}$ be the absorption time for the MT--RA model. That is, the time when there is no spreader and the process dies out.

    \begin{thm}\label{LLN}
        Suppose the MT--RA model with initial condition $x^0$ such that $\sum_{l=1}^{\infty}(1+q_l)x_l^0>1$. Then $\lim_{n\to\infty}\tau_n=\tau_\infty\in(0,\infty)$ almost surely. Moreover,
        \begin{equation*}
          \frac{1}{n}(Y^n(\tau_n),X_1^n(\tau_n),X_2^n(\tau_n),\dots)\to(0,x_{1,\infty},x_{2,\infty},\dots),  
        \end{equation*}
        as $n$ goes to infinity, where 
        \begin{eqnarray*}
            x_{1,\infty}&=&x_1^0\exp(-\zeta_\infty),
        \end{eqnarray*}
        and, for $\ell\geq2$,
        \begin{equation}\label{xinfty}
        \begin{array}{rcl}
            x_{\ell,\infty}&=&\exp(-\zeta_\infty)\bigg[x_1^0\frac{(\zeta_\infty)^{\ell-1}}{(\ell-1)!}\Big(1-\sum_{i=0}^{\ell-1}p_i\Big)\\
        &&+\displaystyle\sum_{k=2}^{\ell}\frac{x_k^0}{(\ell-k)!}(\zeta_\infty)^{\ell-k}\prod_{i=k}^{\ell-1}(1-q_i)\bigg],
        \end{array}
        \end{equation}
        where $\zeta_\infty$ is the unique positive solution to
        \begin{equation}\label{eqtau}
            \begin{array}{c}
            \zeta_\infty=\displaystyle(1+q_1)x_1^0\gamma(1,\zeta_\infty)+\sum_{\ell=2}^{\infty}(1+q_\ell)\bigg[x_1^0\frac{\gamma(\ell,\zeta_\infty)}{\Gamma(\ell)}\Big(1-\sum_{i=0}^{\ell-1}p_i\Big)\\
            +\displaystyle\sum_{k=2}^{\ell}x_k^0\frac{\gamma(\ell-k+1,\zeta_\infty)}{\Gamma(\ell-k+1)}\prod_{i=k}^{\ell-1}(1-q_i)\bigg].
            \end{array}
        \end{equation}
        
    \end{thm}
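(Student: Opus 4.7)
My strategy is to invoke the infinite--dimensional law of large numbers proved in Section \ref{llnlp} after the natural random time change $\zeta^n(t):=\frac{1}{n}\int_0^t Y^n(s)\,ds$, which removes the common factor $Y$ from every rate in \eqref{ratesMTRA}. In the new timescale, $\tilde\RM^n(\zeta):=\frac{1}{n}\RM^n\big((\zeta^n)^{-1}(\zeta)\big)$ becomes a genuinely density--dependent chain on $\ell^p(\R)$ whose jumps $-e_i$, $-e_i+e_{i+1}$, $-e_i+e_0$ occur at rates linear in the current density. After checking that the induced drift satisfies the continuity and summability hypotheses of the $\ell^p$--LLN on the invariant simplex $\{(y,x)\ge 0:\,y+\sum_\ell x_\ell\le 1\}$, the LLN yields uniform convergence of $\tilde\RM^n$ on compact $\zeta$--intervals to the solution $(y,x_1,x_2,\dots)$ of the deterministic system
\begin{align*}
\frac{dy}{d\zeta}&=\sum_{\ell\ge 1}(1+q_\ell)x_\ell-1,\qquad \frac{dx_1}{d\zeta}=-x_1,\\
\frac{dx_2}{d\zeta}&=(1-p_0-p_1)x_1-x_2,\qquad \frac{dx_\ell}{d\zeta}=(1-q_{\ell-1})x_{\ell-1}-x_\ell,\quad \ell\ge 3,
\end{align*}
with initial condition $x^0$.

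Next I would solve the cascade explicitly. From $x_1(\zeta)=x_1^0 e^{-\zeta}$, multiplying each $x_\ell$--equation by $e^{\zeta}$ and integrating by induction produces an exponential--polynomial expression for $x_\ell(\zeta)$; the prefactor identity
\[
(1-p_0-p_1)\prod_{i=2}^{\ell-1}(1-q_i)=1-\sum_{i=0}^{\ell-1}p_i,
\]
immediate from $q_i=p_i/\sum_{k\ge i}p_k$ together with $\sum_k p_k=1$, telescopes the iterated factors along the $x_1^0$--branch and reproduces \eqref{xinfty} at $\zeta=\zeta_\infty$. Integrating the $y$--equation from $0$ to the first positive zero $\zeta_\infty$ of $y$, using $y(0)=0$ and the elementary antiderivative $\int_0^\zeta\eta^{j-1}e^{-\eta}/(j-1)!\,d\eta=\gamma(j,\zeta)/\Gamma(j)$, turns the hitting--time identity $y(\zeta_\infty)=0$ into precisely \eqref{eqtau}. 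A positive root exists since the threshold hypothesis forces $y'(0^+)=\sum_{\ell\ge 1}(1+q_\ell)x_\ell^0-1>0$, while the polynomial--exponential decay of each $x_\ell$ drives $y'(\zeta)\to -1$ as $\zeta\to\infty$.

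The most delicate point is \emph{uniqueness} of $\zeta_\infty$: the multi--wave examples \ref{p04}--\ref{p06} show that $f(\zeta):=\sum_\ell(1+q_\ell)x_\ell(\zeta)$ can cross the threshold $1$ more than once, so $y$ is not in general concave, and a priori $y(\zeta)=0$ could have several positive roots. I would resolve this by exploiting the specific structure of $f$ as a linear combination of unimodal functions $\zeta^{\ell-1}e^{-\zeta}$, each peaked at $\zeta=\ell-1$, to show that after $f$ last crosses $1$ downwards it stays below $1$ forever, so $y$ returns to $0$ exactly once. Once $\zeta_\infty$ is pinned down, transferring the conclusion back to the original timescale is a continuous--mapping argument for the first--passage functional: since $y$ is bounded away from $0$ on compact subintervals of $(0,\zeta_\infty)$, we get $\zeta^n(\tau_n)\to\zeta_\infty$ and $\tilde\RM^n(\zeta_\infty)\to(0,x_{1,\infty},\dots)$, which is the assertion of the theorem. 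The main obstacle throughout is thus the uniqueness of $\zeta_\infty$ in the multi--wave regime; the secondary technical point is the initial boundary layer at $y=0$, handled by noting that the stochastic process leaves $Y=0$ on a time scale of order $(\log n)/n$ before the fluid trajectory takes over.
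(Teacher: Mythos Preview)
Your plan is the paper's proof: random time change by $\int_0^{\cdot}Y^n$, application of the $\ell^p$--LLN (Theorem \ref{IDLLN}) to the time--changed chain, explicit solution of the triangular ODE cascade via the integrating factor $e^{\zeta}$ (the paper's equations \eqref{isystem}--\eqref{xn}), and identification of $\zeta_\infty$ through the incomplete--gamma integrals. Your ``$y'(\zeta)\to-1$'' observation is exactly the paper's $3\varepsilon$ bound showing $\sum_\ell(1+q_\ell)x_\ell(t)<1$ for large $t$.

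Where you diverge is in elevating uniqueness of $\zeta_\infty$ to ``the most delicate point.'' The paper does \emph{not} prove that \eqref{eqtau} has a unique positive root; it simply defines $\tilde\tau_\infty$ as the first exit time of the deterministic flow from $E_+$ (equivalently $\inf\{t>0:x_0(t)=0\}$), which is automatically well--defined as an infimum, and then records the equation it satisfies. Your proposed resolution via unimodality does not deliver what you claim: knowing that $f(\zeta)=\sum_\ell(1+q_\ell)x_\ell(\zeta)$ eventually stays below $1$ only tells you that $y$ is eventually strictly decreasing, hence has a \emph{last} zero; it does nothing to exclude an earlier zero of $y$ in a trough between two waves. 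So your argument proves finiteness (which the paper also proves) but not uniqueness. If you insist on uniqueness of the root rather than taking the smallest one, you need a separate argument that $y>0$ on $(0,\zeta_\infty)$, and neither your sketch nor the paper provides one.

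Your boundary--layer remark about the stochastic process ``leaving $Y=0$'' is also not part of the paper's argument (and is phrased oddly, since $Y=0$ is absorbing). The paper works entirely with the time--changed chain and the deterministic hitting time with $t>0$ in the infimum; the issue of $Y^n(0)=O(1)$ with positive extinction probability, which would obstruct the almost--sure statement under the standard configuration, is genuine but is not addressed in the paper either.
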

    
    \begin{remark}\label{RLLN}
        Theorem \ref{LLN} holds for any probability distribution with the standard configuration. Also,
        \begin{equation*}
            x_{1,\infty}=\exp(-\zeta_\infty)
        \end{equation*}
        and, for $\ell\geq2$,
        \begin{equation*}
            x_{\ell,\infty}=\exp(-\zeta_\infty)\frac{\zeta_\infty^{\ell-1}}{(\ell-1)!}\Big(1-\sum_{i=0}^{\ell-1}p_i\Big),
        \end{equation*}
        with $\zeta_\infty>0$ being the unique positive root of equation
        \begin{equation*}
            \zeta_\infty=\displaystyle(1+q_1)\gamma(1,\zeta_\infty)+\sum_{\ell=2}^{\infty}(1+q_\ell)\frac{\gamma(\ell,\zeta_\infty)}{\Gamma(\ell)}\Big(1-\sum_{i=0}^{\ell-1}p_i\Big).
        \end{equation*}
    \end{remark}
    Let $\tau_{\max{}}^{(n)}=\sup{\{\zeta\in(0,\tau_n):Y^n(\zeta)\geq Y^n(t)\quad\forall t\in[0,\tau_n]\}}$.
    \begin{thm}\label{Maxprop}
         Under the conditions of Theorem \ref{LLN} it holds that
         \begin{equation*}
            \lim_{n\to\infty}\tau_{\max{}}^{(n)} =\tau_{\max{}}\in[0,\tau_\infty)
        \end{equation*}
        almost surely and
        \begin{equation*}
            \lim_{n\to\infty}\frac{1}{n}Y^n(\tau_{\max{}}^{(n)})\to y_{\max{}}\in[0,1]
        \end{equation*}
        holds almost surely too, with $y_{\max{}}=\max\{y_{j,\max{}}:j=1,\dots,k\}$, where
         \begin{eqnarray}
            y_{j,\max}&=&\displaystyle(1+q_1)x_1^0\gamma(1,\zeta_{j,\max{}})+\sum_{\ell=2}^{\infty}(1+q_\ell)\bigg[x_1^0\frac{\gamma(\ell,\zeta_{j,\max{}})}{\Gamma(\ell)}\Big(1-\sum_{i=0}^{\ell-1}p_i\Big)\nonumber\\
            &&+\displaystyle\sum_{k=2}^{\ell}x_k^0\frac{\gamma(\ell-k+1,\zeta_{j,\max{}})}{\Gamma(\ell-k+1)}\prod_{i=k}^{\ell-1}(1-q_i)\bigg]-\zeta_{\max{}},\label{ymaxj}
        \end{eqnarray}
        where $\zeta_{j,\max{}}$ is a solution in $\zeta$ to the following equation:
         \begin{eqnarray}
             \exp(-\zeta)\bigg((1+q_1)x_1^0+\sum_{\ell=2}^{\infty}(1+q_\ell)\bigg[x_1^0\frac{\zeta^{\ell-1}}{(\ell-1)!}\Big(1-\sum_{i=0}^{\ell-1}p_i\Big)\nonumber&&\\
            +\displaystyle\sum_{k=2}^{\ell}\frac{x_k^0}{(\ell-k)!}\zeta^{\ell-k}\prod_{i=k}^{\ell-1}(1-q_i)\bigg]\bigg)&=&1.\label{zetatime}
         \end{eqnarray}
         Moreover, if either $p_{m+1}\geq\sum_{j=0}^{m}p_j$ for any $m\leq m_0$ and $p_{m}=0$ for any $m\geq m_0+1$ or $p_{m+1}<\sum_{j=0}^{m}p_j$ for any $m\geq2$, the global maximum of the spreader's proportion is also the unique critical point.
    \end{thm}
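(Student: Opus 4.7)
The plan is to combine Theorem \ref{LLN} with an accelerated-time change, which linearises the listener dynamics and reduces the maximum problem to an algebraic equation. First, Theorem \ref{LLN} provides almost sure uniform convergence, on compact subintervals of $[0,\tau_\infty)$, of the scaled process $n^{-1}(Y^n, X_1^n, X_2^n,\ldots)$ to the deterministic solution $(y(t), x_1(t), x_2(t),\ldots)$ of the limiting ODE. The limit $y$ is continuous on $[0,\tau_\infty]$ with $y(0)=y(\tau_\infty)=0$ and strictly positive on $(0,\tau_\infty)$ under the threshold $\sum(1+q_\ell)x_\ell^0>1$. A standard argmax continuity argument (using that the càdlàg paths of $n^{-1}Y^n$ have jumps bounded by $1/n$) then yields the claimed almost sure convergences $\tau_{\max{}}^{(n)}\to\tau_{\max{}}\in[0,\tau_\infty)$ and $n^{-1}Y^n(\tau_{\max{}}^{(n)})\to y_{\max{}}\in[0,1]$.

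Next, I would characterise the critical times of $y$. In original time one has $\dot y=y\bigl[\sum_{\ell\geq 1}q_\ell x_\ell(t)-\bigl(1-\sum_\ell x_\ell(t)\bigr)\bigr]$, so on $(0,\tau_\infty)$ the critical-point condition collapses to $\sum_\ell(1+q_\ell)x_\ell(t)=1$. Passing to the accelerated time $\zeta(t):=\int_0^t y(s)\,ds$ turns the listener equations into the linear cascade
\begin{equation*}
\frac{dx_1}{d\zeta}=-x_1,\quad \frac{dx_2}{d\zeta}=-x_2+(1-p_0-p_1)x_1,\quad \frac{dx_\ell}{d\zeta}=-x_\ell+(1-q_{\ell-1})x_{\ell-1}\quad(\ell\geq 3),
\end{equation*}
whose explicit solutions are the bracketed expressions appearing in \eqref{xinfty} with $\zeta$ left free. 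Substituting these into $\sum_\ell(1+q_\ell)x_\ell(\zeta)=1$ yields equation \eqref{zetatime}, whose roots are the candidate critical times $\zeta_{j,\max{}}$; integrating $dy/d\zeta=\sum(1+q_\ell)x_\ell-1$ from $0$ to $\zeta_{j,\max{}}$ and using the antiderivative $\int_0^\zeta s^{\ell-1}e^{-s}\,ds=\gamma(\ell,\zeta)$ produces the explicit value formula \eqref{ymaxj}. Because critical points of a real-analytic function are isolated and $y$ attains its maximum, it suffices to take $y_{\max{}}=\max_j y_{j,\max{}}$.

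For the uniqueness statement I would analyse the function $G(\zeta):=\sum_\ell(1+q_\ell)x_\ell(\zeta)-1$, whose sign changes from $+$ to $-$ correspond to local maxima of $y$. One has $G(0)>0$ from the threshold and $G(\zeta)<0$ near $\zeta_\infty$ from $y(\zeta_\infty)=0$, so the content of the claim is that $G$ admits exactly one zero on $(0,\zeta_\infty)$. In the bounded-support case $p_m=0$ for $m\geq m_0+1$ with the factorial-growth condition $p_{m+1}\geq\sum_{j=0}^{m}p_j$, $G$ reduces to a finite combination of terms $e^{-\zeta}\zeta^{j}/j!$, and a direct coefficient comparison shows that $G'$ changes sign at most once, forcing $G$ to be strictly unimodal. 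In the tail case $p_{m+1}<\sum_{j=0}^{m}p_j$ for every $m\geq 2$, an analogous analysis of $G'$, grouping terms and exploiting log-concavity of the underlying Poisson weights, again yields a single sign change of $G'$, so that $G$ has exactly one zero on $(0,\zeta_\infty)$.

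The main obstacle is precisely this uniqueness analysis. The first two steps follow a standard argmax-plus-accelerated-time template once Theorem \ref{LLN} is in hand, but bounding the number of sign changes of the infinite series $G(\zeta)$ under each of the two alternative structural hypotheses on $(p_m)$ requires a careful termwise manipulation that genuinely depends on the Poissonian form of $x_\ell(\zeta)$ and on the specific tail or support condition, and that is the delicate part of the argument.
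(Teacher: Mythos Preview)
Your plan coincides with the paper's: pass to accelerated time $\zeta=\int_0^t y$, solve the resulting linear listener cascade explicitly, read off the critical-point condition $\sum_\ell(1+q_\ell)x_\ell(\zeta)=1$ as equation~\eqref{zetatime}, integrate $dy/d\zeta$ to obtain~\eqref{ymaxj}, and invoke the functional law of large numbers (Theorem~\ref{IDLLN}) for the argmax convergence. On all of this you and the paper agree.

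The one substantive divergence is the uniqueness step, which you yourself flag as the delicate part. The paper does \emph{not} argue via log-concavity of Poisson weights. Instead it writes $G(\zeta)+1=e^{-\zeta}p(\zeta)$ with $p(\zeta)=\sum_{i\geq 0}a_i\,\zeta^i/i!$ and exploits the algebraic identity (under the standard configuration)
\[
a_\ell\;=\;(1+q_{\ell+1})\Bigl(1-\sum_{j=0}^{\ell}p_j\Bigr)\;=\;1+p_{\ell+1}-\sum_{j=0}^{\ell}p_j .
\]
The two alternative hypotheses in the theorem are precisely the statements $a_\ell\geq 1$ for $\ell\leq m_0$ with the series terminating thereafter, or $a_\ell<1$ for all $\ell\geq 2$. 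Either way one compares $p$ with $e^\zeta$ coefficientwise: as soon as $a_i<1$ for all $i\geq k$, one has $(p-e^\zeta)^{(k)}\leq 0$ on $[0,\infty)$, and a Rolle-type descent bounds the number of zeros of $p(\zeta)-e^\zeta$, hence of $G$. This coefficient identity is the concrete mechanism that drives the uniqueness; your sketch based on ``grouping terms and exploiting log-concavity'' does not recover it, and without it the tail-case argument in particular does not go through as stated.
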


    The next corollary exhibits a qualitative result for the model in \cite{rada2021role}.

    \begin{corollary}\label{kMTmaximum}
        The limit proportion of spreader individuals in the $k$--MT model has a unique maximum for any $k\geq0$.
    \end{corollary}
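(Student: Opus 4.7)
The plan is to realize the $k$-MT model of \cite{rada2021role} as the deterministic specialization of the MT--RA model in which $p_k = 1$ and $p_j = 0$ for $j \neq k$, and then to prove uniqueness of the maximum by a direct calculus argument on the resulting critical-point equation \eqref{zetatime}. (The structural dichotomy listed at the end of Theorem \ref{Maxprop} is not directly applicable to a point-mass distribution, so the corollary requires this independent argument.) Under this identification the ignorants, $1$-awares, \ldots, $(k{-}1)$-awares, and spreaders of the $k$-MT model correspond respectively to the $1$-listeners, $2$-listeners, \ldots, $k$-listeners, and spreaders of the MT--RA model, and the initial condition becomes the standard $x^0 = (0, 1, 0, \ldots)$. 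Consequently $1 + q_\ell = 2$ exactly at $\ell = k$ (and equals $1$ otherwise), while $1 - \sum_{i=0}^{\ell-1} p_i = \1[\ell \leq k]$.

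Substituting these data into \eqref{zetatime}, the inner sum vanishes because $x_j^0 = 0$ for $j \geq 2$, and the outer sum truncates at $\ell = k$. The critical-point equation therefore collapses to $g_k(\zeta) = 1$, where for $k \geq 2$
\[
g_k(\zeta) := e^{-\zeta}\!\left(\sum_{j=0}^{k-1} \frac{\zeta^j}{j!} + \frac{\zeta^{k-1}}{(k-1)!}\right),
\]
the extra last term accounting for the doubled coefficient at $\ell = k$. A telescoping differentiation of the Poisson partial sum against its own derivative then yields
\[
g_k'(\zeta) = \frac{e^{-\zeta}\,\zeta^{k-2}}{(k-1)!}\,(k-1-2\zeta).
\]

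Hence $g_k$ is unimodal on $[0,\infty)$: it rises from $g_k(0)=1$ to its unique global maximum at $\zeta^{\ast} = (k-1)/2$ and then decreases monotonically to $0$, so $g_k(\zeta)=1$ has exactly one positive root $\zeta_{\max}$. The remaining case $k = 1$ is immediate, since $g_1(\zeta) = 2 e^{-\zeta}$ is strictly decreasing and meets $1$ exactly once at $\zeta = \ln 2$. By Theorem \ref{Maxprop} this single $\zeta_{\max}$ corresponds to the unique critical point — and therefore to the unique global maximum — of the limiting spreader proportion in the $k$-MT model. The main obstacle is the derivative computation above: one must verify that the cancellation between the derivative of the Poisson partial sum and the baseline term collapses cleanly to a single monomial times the linear factor $(k-1-2\zeta)$. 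Everything else is routine bookkeeping.
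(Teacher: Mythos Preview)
Your proof is correct and actually more explicit than what the paper does. The paper's argument is a single line: it notes that the $k$--MT model is the MT--RA specialization with $p_k=1$ and $p_i=0$ for $i\neq k$, and implicitly invokes the first uniqueness condition at the end of Theorem~\ref{Maxprop}. You instead substitute directly into the critical-point equation~\eqref{zetatime}, obtain the closed form $g_k(\zeta)=e^{-\zeta}\bigl(\sum_{j=0}^{k-1}\zeta^j/j!+\zeta^{k-1}/(k-1)!\bigr)$, compute $g_k'(\zeta)=e^{-\zeta}\zeta^{k-2}(k-1-2\zeta)/(k-1)!$ via the telescoping identity $S_k'=S_k-\zeta^{k-1}/(k-1)!$, and read off unimodality. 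Your derivative computation is correct, and your observation that the dichotomy in Theorem~\ref{Maxprop}, as literally stated, does not cleanly cover the point mass (the clause ``$p_m=0$ for $m\ge m_0+1$'' forces $p_{m_0+1}=0$, which conflicts with the inequality at $m=m_0$ unless one reads the indices with some slack) is a fair reading; the paper's one-line proof evidently intends the boundary case to be included. What your approach buys is a self-contained argument that does not depend on how that boundary is interpreted, together with the explicit location $\zeta^\ast=(k-1)/2$ of the peak of $g_k$; what the paper's approach buys is brevity, deferring all the work to the general criterion. One minor remark: for $k\ge2$ under the standard configuration one has $g_k(0)=1$, so $\zeta=0$ is also a root of~\eqref{zetatime}; your unimodality argument shows this is not a local maximum of $y$ (since $g_k>1$ immediately to the right), and the unique \emph{positive} root you find is indeed the unique interior maximum, which is what is being claimed.
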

    \begin{proof}
        It is enough to see that, in our setting, $p_i=0$ for any $i\neq k$ and $p_k=1$.
    \end{proof}

    \begin{remark}\label{RMaxprop}
        Under the standard configuration, equation \eqref{zetatime} reads
        \begin{eqnarray*}
             \exp(-\zeta)\bigg[(1+q_1)+\sum_{\ell=2}^{\infty}(1+q_\ell)\frac{\zeta^{\ell-1}}{(\ell-1)!}\Big(1-\sum_{i=0}^{\ell-1}p_i\Big)\bigg]=1.
         \end{eqnarray*}
         And equation \eqref{ymaxj} reads
         \begin{eqnarray*}
            y_{j,\max}=\displaystyle(1+q_1)\gamma(1,\zeta_{\max{}})+\sum_{\ell=2}^{\infty}(1+q_\ell)\frac{\gamma(\ell,\zeta_{j,\max{}})}{\Gamma(\ell)}\Big(1-\sum_{i=0}^{\ell-1}p_i\Big)-\zeta_{j,\max{}}.
        \end{eqnarray*}
    \end{remark}
    
\section{Examples}\label{S:examples}

    In order to show some features we pointed out, we exhibit two examples under the standard configuration. In this setup, the distribution $(p_i)_{i\geq0}$ fully characterize the model. 

    \begin{example}\label{p04}
        Consider the MT--RA model with parameters $p_0 = 0.053$, $p_1 = 0.004$, $p_2=0.023$, $p_3=0.163$, $p_4=0.757$ and $p_i=0$ for $i\geq5$. In this case, we observe two waves of propagation in the proportion of spreaders, the second being the most expressive (see Figure \ref{053004023163}).
        \begin{figure}[H]
            \centering
            \includegraphics[width=0.5\linewidth]{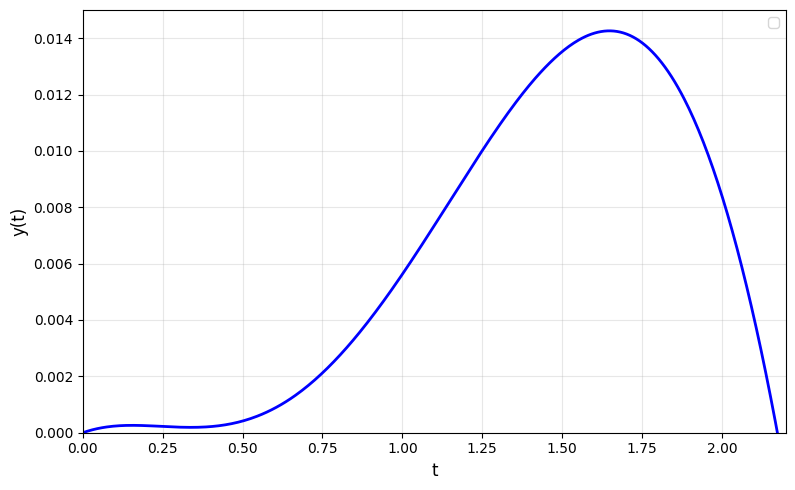}
            \caption{Limit proportion of spreader individuals as a function of accelerated time.}
            \label{053004023163}
        \end{figure}
    \end{example}

    \begin{example}\label{p06}
        Consider the MT--RA with parameters $p_0 = 0.009$, $p_1 = 0.014$, $p_2=0.002$, $p_3=0.038$, $p_4=0.004$, $p_5=0.167$, $p_6=0.766$ and $p_i=0$ for $i\geq7$. Here we observe two waves of propagation in the proportion of spreaders, the most expressive being the first (see Figure \ref{009014002038004167}).
        \begin{figure}[H]
            \centering
            \includegraphics[width=0.5\linewidth]{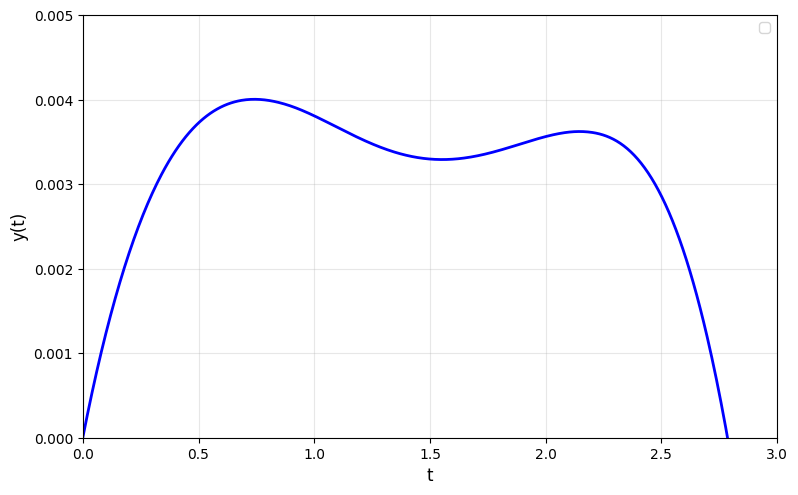}
            \caption{Limit proportion of spreader individuals as a function of accelerated time.}
            \label{009014002038004167}
        \end{figure}
    \end{example}

    In what follows, we apply Theorems \ref{LLN} and \ref{Maxprop} to obtain the final proportions and the maximal proportions for some known distributions in the standard configuration.

    \subsection{Poisson distribution}
    In this case, $p_i$ is the Poisson distribution with mean $\lambda>0$. For this choice of $p_i$, the continuous-time Markov Chain $\{(Y(t),X_1(t),\dots)\}_{t\geq0}$ on $\N_\ast^{\N_\ast}$ has the following transitions and rates:
    \begin{equation*}
        \begin{array}{ll}
             \text{Transition}&\text{Rate}  \\
            -e_{1} & e^{-\lambda}X_1Y, \\
            -e_{1}+e_{0} & e^{-\lambda}\lambda X_1Y,\\
            -e_{1}+e_{2} & (1-e^{-\lambda}-e^{-\lambda}\lambda)X_1Y,\\
            -e_{i}+e_{i+1}& (1-q_i)X_iY,\\
            -e_{i}+e_{0}& q_iX_iY\\
            -e_{0} & Y(n-1-\sum_{i=1}^{\infty}X_i).
        \end{array}
    \end{equation*}
    
    The proportions obtained in Theorems \ref{LLN} and \ref{Maxprop} for particular values of $\lambda$ are given in Table \ref{poitable}.
    
        \begin{table}[H]
            \centering
            \begin{tabular}{|c|c|c|c|c|}
                \hline$\lambda$&$x_{1,\infty}$ & $x_{2,\infty}$ & $x_{3,\infty}$ & $y_{\max}$  \\
                 \hline$2$&$0.238539$ & $0.203074$ & $0.079212$ & $0.093006$\\
                 \hline$16$&$0.005974$&$0.030592$& $0.078317$&$0.000716$\\
                 \hline
            \end{tabular}
            \caption{Final proportions and maximal proportion for the Poisson distribution with parameters $\lambda=2$ and $\lambda=16$.}
            \label{poitable}
        \end{table}
    
    The corresponding graphs for the proportion of spreading individuals of these values of $\lambda$ are presented in Figure \ref{graphpoisson}.  
    
    \begin{figure}[H]
        \centering
        \includegraphics[width=0.44\linewidth]{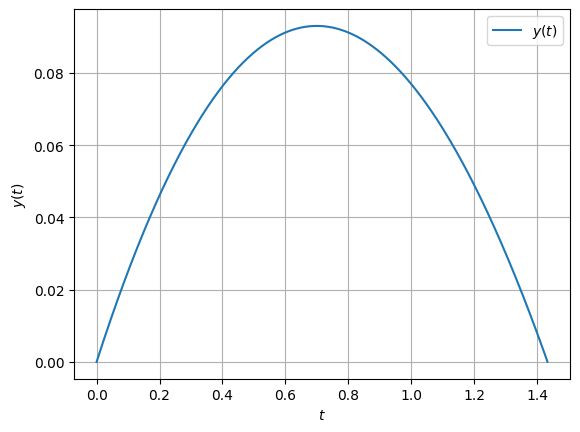}
        \includegraphics[width=0.445\linewidth]{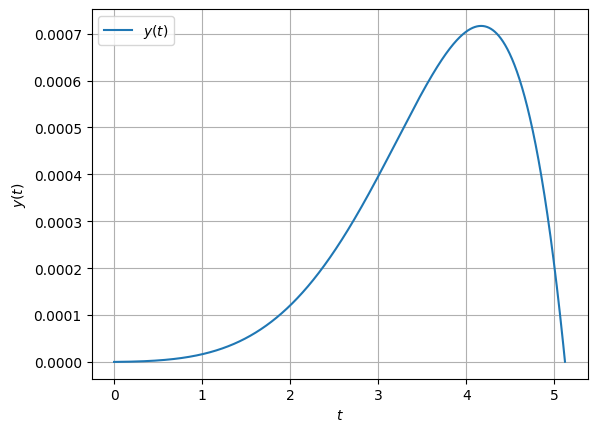}
        \caption{Limit proportion of spreaders as a function of accelerated time with $\lambda=2$ and  $\lambda=16$.}
        \label{graphpoisson}
        \end{figure}

    The maximal proportion of spreader individuals as a function of $\lambda$, $y_{\max}(\lambda)$, seems to reach its maximum at $\lambda\approx1.67$, as shown in Figure \ref{poimaximal}.
    \begin{figure}[H]
        \centering
        \includegraphics[width=0.99\linewidth]{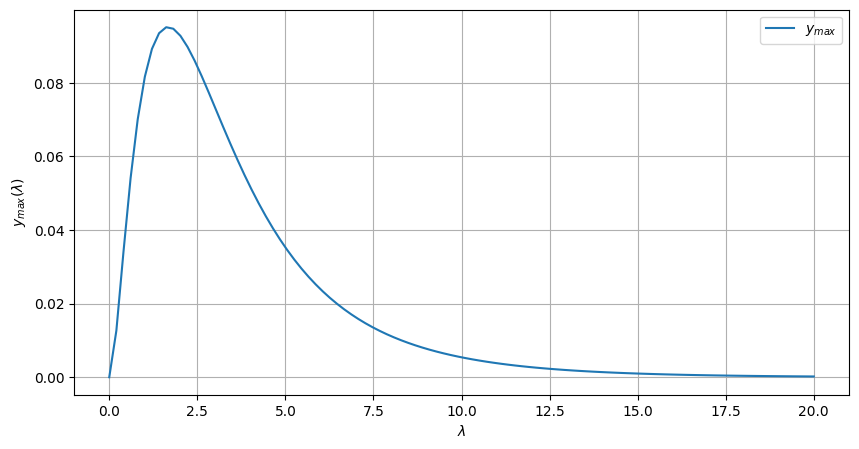}
        \caption{Graph of $y_{max}(\lambda)$ for the Poisson distribution.}
        \label{poimaximal}
    \end{figure}

    We conjecture that $y_{\max}(\lambda)$ reaches its maximum for some $\lambda>0$; however, proving this statement and its uniqueness is beyond the scope of this work and is left as an open question.

    Now, consider a rumor outbreak under the standard configuration that achieves the maximal proportion of spreader individuals greater than $0.1$. The graph in Figure \ref{poimaximal} makes us suspicious that it is not possible to use the MT--RA model with Poisson distribution to analyze it.

    The final proportion of ignorant individuals, $x_{1,\infty}$, also depends on $\lambda$ and appears to converge to $0$ as $\lambda$ goes to infinity.

    \begin{figure}[H]
        \centering
        \includegraphics[width=1\linewidth]{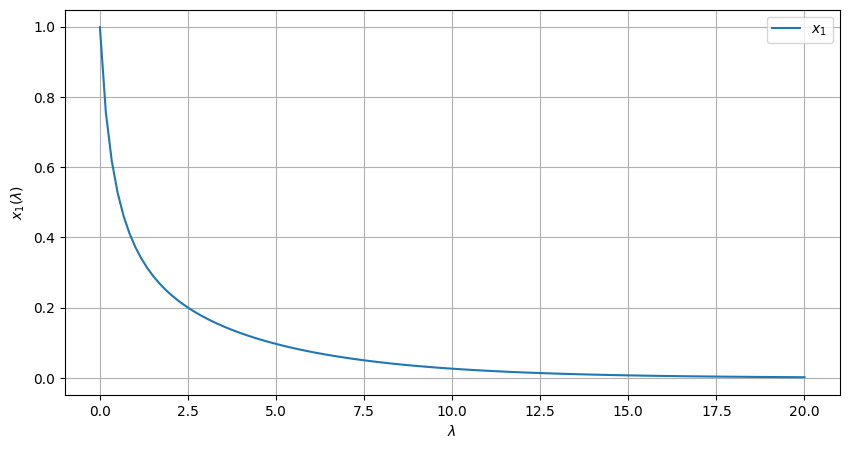}
        \caption{Values of the final proportion of ignorant individuals, $x_{1,\infty}$, as a function of $\lambda$.}
        \label{poix1}
    \end{figure}

    The graph in Figure \ref{poix1} says that the higher the parameter $\lambda$, the lower the number of individuals who never hear the rumor. Combining this with the graph in Figure \ref{poimaximal}, there is a strong suggestion that the increase in the parameter $\lambda$ also increases the time of the rumor outbreak.

\subsection{Zeta distribution}
        Here, $p_i$ is the Zeta distribution with parameter $s>1$, that is, 
        \begin{equation*}
            p_i=\frac{i^{-s}}{\zeta(s)},
        \end{equation*} 
        where $\zeta(s)=\sum_{i\geq1}i^{-s}$.
        
        Since $p_0=0$, there is no transition from ignorant to stifler, then the continuous-time Markov Chain $\{(Y(t),X_1(t),\dots)\}_{t\geq0}$ on $\N_\ast^{\N_\ast}$ for this case has the following transitions and rates:
    \begin{equation}
        \begin{array}{ll}
             \text{Transition}&\text{Rate}  \\
            -e_{1}+e_{0} & \frac{1}{\zeta(s)} X_1Y,\\
            -e_{1}+e_{2} & (1-\frac{1}{\zeta(s)})X_1Y,\\
            -e_{i}+e_{i+1}& (1-q_i)X_iY,\\
            -e_{i}+e_{0}& q_iX_iY,\\
            -e_{0} & Y(n-1-\sum_{i=1}^{\infty}X_i).
        \end{array}
    \end{equation}
    
    The proportions obtained in Theorems \ref{LLN} and \ref{Maxprop} for selected values of $s$ are given in Table \ref{zetable}.
    
    \begin{table}[H]
            \centering
            \begin{tabular}{|c|c|c|c|c|}
                \hline$s$&$x_{1,\infty}$ & $x_{2,\infty}$ & $x_{3,\infty}$ & $y_{\max}$  \\
                 \hline$1.01$&$0.169622$ & $0.297948$ & $0.2629893$ & $0.00379$\\
                 \hline$5$&$0.2014504$&$0.0114945$& $0.0014158$&$0.2994$\\
                 \hline
            \end{tabular}
            \caption{Final proportions and maximal proportion for the Zeta distribution with parameters $s=1.01$ and $s=5$.}
            \label{zetable}
    \end{table}

    The graphs corresponding to the proportion of spreader individuals for these choices of $s$ are given in Figure \ref{graphzeta}.

    \begin{figure}[H]
        \centering
        \includegraphics[width=0.46\linewidth]{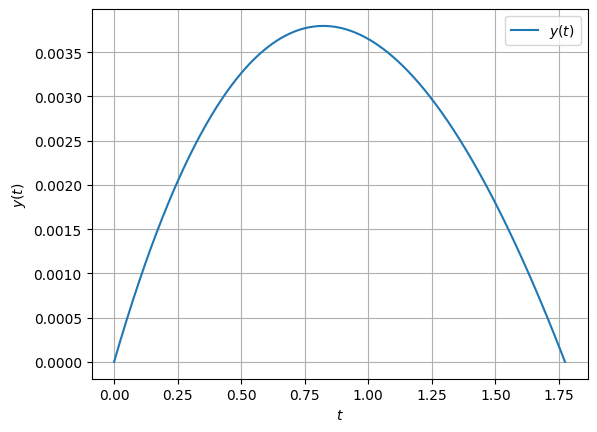}
        \includegraphics[width=0.45\linewidth]{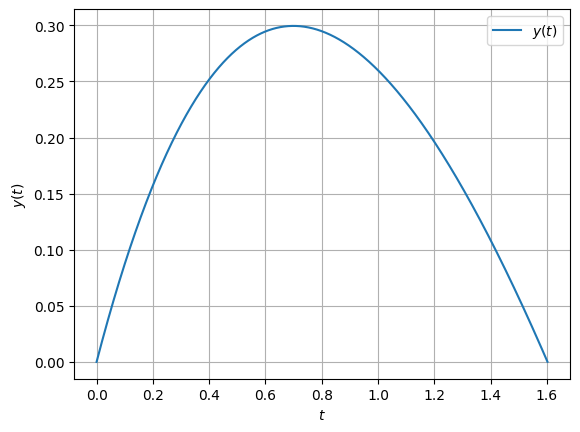}
        \caption{Limit proportion of spreaders as a function of accelerated time with $s=1.01$ and $s=5$.}
        \label{graphzeta}
    \end{figure}
    
    The maximal proportion of spreader individuals, $y_{\max}$, depends on $s$ and appears to converge to $0.3068$ as $s$ goes to infinity (see Figure \ref{zetamax}).
    
    \begin{figure}[H]
        \centering
        \includegraphics[width=1\linewidth]{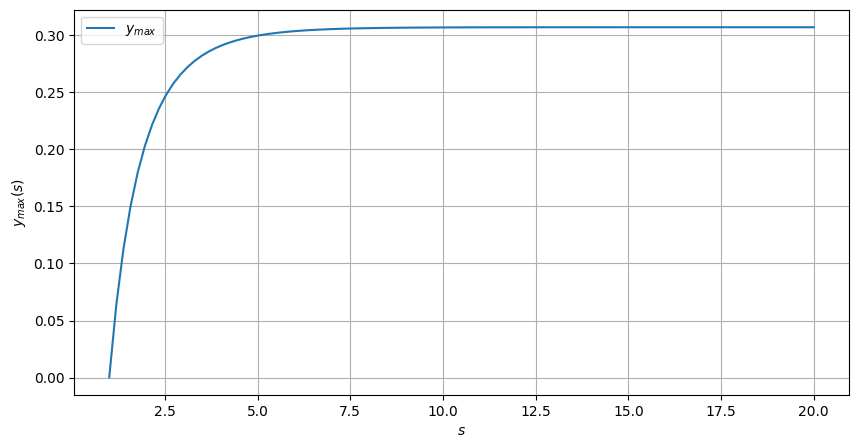}
        \caption{Values of $y_{max}$ as a function of $s$.}
        \label{zetamax}
    \end{figure}

    For $s>2$, the Zeta distribution of parameter $s$ has first moment $\zeta(s-1)/\zeta(s)$. It suggests that for large values of $s$, the rumor outbreak behaves as in the MT model. As a matter of fact, the MT model is a particular case of the MT--RA model with $p_i=0$ for $i\neq1$ and $p_1=1$ so we compute its maximal proportion of spreader individuals and obtain the same limit proportion: $0.3068$.

    The final proportion of ignorant individuals, $x_{1,\infty}$, also depends on $s$ and appears to converge approximately to $0.203187$ as $s$ goes to infinity (see Figure \ref{zetaig}).
    \begin{figure}[H]
        \centering
        \includegraphics[width=1\linewidth]{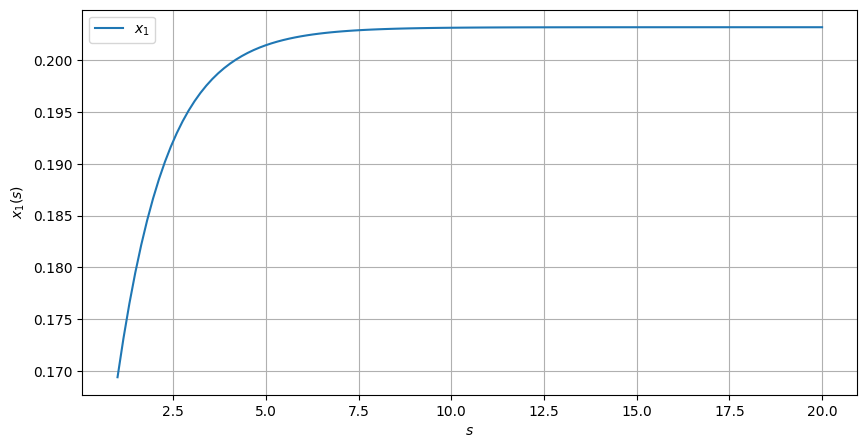}
        \caption{Values of the final proportion of ignorant individuals, $x_{1,\infty}$, as a function of $s$.}
        \label{zetaig}
    \end{figure}

    Again, in the MT model, the final proportion of ignorant individuals is $0.203187$, reinforcing the similarity between both models for large values of $s$.
    
    \subsection{Uniform distribution}
    Here, $p_i$ is the Uniform distribution on $\{0,1,\dots,k\}$, that is,
    \begin{equation*}
        p_i=\frac{1}{k+1},\quad 0\leq i\leq k.
    \end{equation*}
    
    In this case, the MT--RA resembles the $k$--MT model, introduced in \cite{rada2021role}, but these models behave asymptotically differently. In order to show that, we summarize some of the results for the $k$-spreading Maki--Thompson model in Table \ref{kspreading}. We also compute and include the maximal proportion of spreader individuals as it is a particular case of the MT--RA model with $p_i=0$ for $i\neq k$ and $p_k=1$.

    \begin{table}[h]
            \centering
            \begin{tabular}{|c|c|c|c|c|}
                \hline$k$&$x_{1,\infty}$ & $x_{2,\infty}$ & $x_{3,\infty}$ & $y_{\max}$  \\
                 \hline$2$&$0.116586$ & $0.250558$ & $0$ & $0.174233$\\
                 \hline$3$&$0.0680169$&$0.182829$& $0.245723$&$0.110627$\\
                 \hline
            \end{tabular}
            \caption{Final proportions and maximal proportion for the $k$-spreading Maki--Thompson model in \cite{rada2021role} with parameters $k=2$ and $k=3$.}
            \label{kspreading}
    \end{table}

     The MT--RA model with Uniform distribution on $\{0,\dots,k\}$ is the continuous-time Markov Chain $\{(Y(t),X_1(t),\dots,X_{k-1}(t))\}_{t\geq0}$ on $\N_\ast^{k}$ with transitions and rates:
    \begin{equation}
        \begin{array}{ll}
             \text{Transition}&\text{Rate}  \\
            -e_{1} & \frac{1}{k+1}X_1Y, \\
            -e_{1}+e_{0} & \frac{1}{k+1}X_1Y,\\
            -e_{1}+e_{2} & \frac{k-1}{k+1}X_1Y,\\
            -e_{i}+e_{i+1}& \frac{k-i}{k+1-i}X_iY,\\
            -e_{i}+e_{0}& \frac{1}{k+1-i}X_iY\\
            -e_{0} & Y(n-1-\sum_{i=1}^{k-1}X_i).
        \end{array}
    \end{equation}
    
    Now, in Table \ref{unitable}, we exhibit the limit proportions for $k=2$ and $k=3$.
    \begin{table}[H]
            \centering
            \begin{tabular}{|c|c|c|c|c|}
                \hline$k$&$x_{1,\infty}$ & $x_{2,\infty}$ & $x_{3,\infty}$ & $y_{\max}$  \\
                 \hline$2$&$0.3438126$ & $0.1223581$ & $0$ & $0.0848$\\
                 \hline$3$&$0.3186747$&$0.1822157$& $0.0520947$&$0.067350$\\
                 \hline
            \end{tabular}
            \caption{Final proportions and maximal proportion for the Uniform distribution with parameters $k=2$ and $k=3$.}
            \label{unitable}
    \end{table}

    In Figure \ref{graphuniform}, we have the graphs corresponding to the proportion of spreader individuals for these parameters.
    
    \begin{figure}[H]
        \centering
        \includegraphics[width=0.45\linewidth]{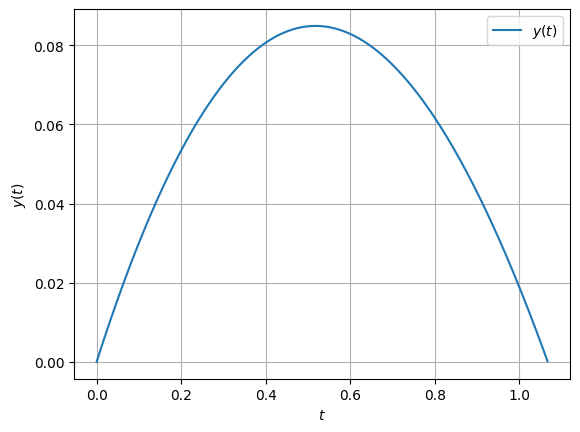}
        \includegraphics[width=0.45\linewidth]{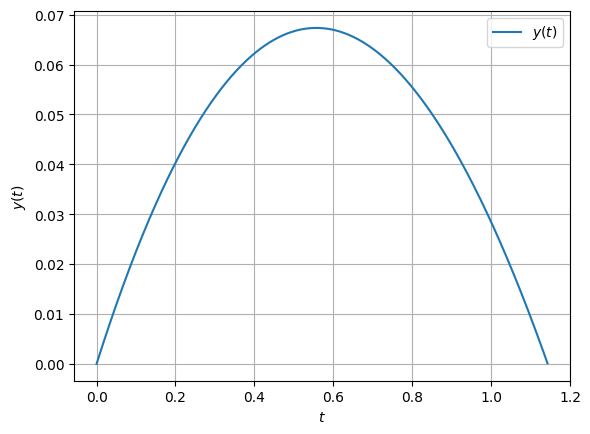}
        \caption{Limit proportion of spreaders as a function of accelerated time with $k=2$ and $k=3$.}
        \label{graphuniform}
    \end{figure}
    
    The maximal proportion of spreader individuals depends on $k$. The graph in Figure \ref{unimax} shows this relation.
    \begin{figure}[H]
        \centering
        \includegraphics[width=1\linewidth]{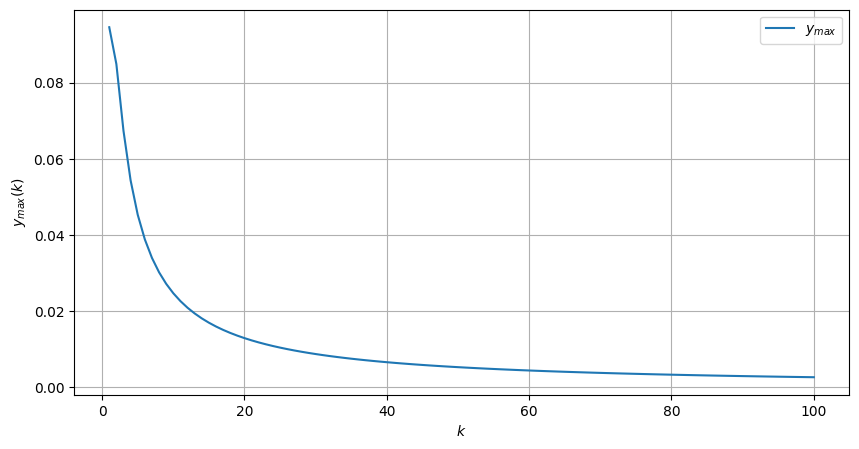}
        \caption{Values of $y_{max}$ as a function of $k$.}
        \label{unimax}
    \end{figure}

    We conjecture that $y_{\max}(k)$ vanishes as $k$ goes to infinity. Moreover, the final proportion of ignorant individuals, $x_{1,\infty}$, also depends on $k$ and appears to converge approximately to $0.3085$ as $k$ goes to infinity (see Figure \ref{unig}).
    \begin{figure}[H]
        \centering
        \includegraphics[width=1\linewidth]{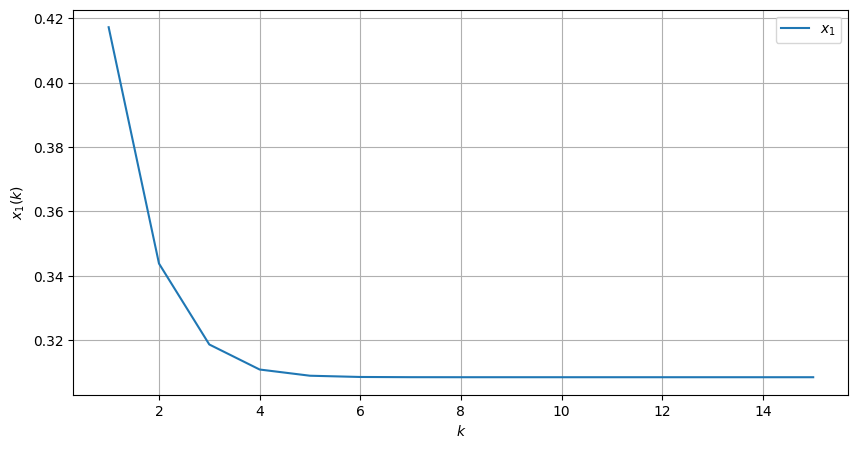}
        \caption{Values of $x_{1,\infty}$ as a function of $k$.}
        \label{unig}
    \end{figure}
    
\section{Proofs}\label{S:proofs}
\subsection{Functional Law of Large Numbers for stochastic processes in $\ell^p(\R)$}\label{llnlp}

    In what follows, some definitions and results from Functional Analysis are needed (see \cite{rudin} for more details). The following construction is inspired by Ethier and Kurtz's ideas in \cite[Ch. 11]{ethier2009markov}.

    For $p\geq1$, let the space of $p$-summable sequences,
    \begin{equation*}
        \ell^p(\R):=\Big\{x=(x_i)_{i\geq1}:x_i\in\mathbb{R}, \Vert x\Vert_p^p:=\sum_{i\geq1}|x_i|^p<\infty\Big\},
    \end{equation*}
    and the set of integer sequences converging to zero,
    \begin{equation*}
        c_{0}(\mathbb{Z}):=\{x=(x_i)_{i\geq1}:x_i\in\mathbb{Z}\quad\forall i\geq1,\lim_{i\to\infty}x_i=0\}.
    \end{equation*}
    In particular, $c_{0}(\Z)\subset\ell^p(\R)$. Let $E\subset\ell^p(\R)$ be an open and bounded set and consider, for any $n\geq1$,
    \begin{equation*}
        E_n=E\cap\{n^{-1}x:x\in c_{0}(\mathbb{Z})\}.
    \end{equation*}
    
    Let $\mathcal{I}$ be a bounded countable subset of $\ell^p(\R)$. For any $m\in\mathcal{I}$, let $\beta_m$ be a nonnegative continuous function such that if $x\in E_n$ and $\beta_m(x)>0$, then $x+n^{-1}m\in E_n$. 

    Assume that there exists $M>0$ such that $\tilde{\beta}_m:=\sup_{x\in E}\beta_m(x)$ satisfies
    \begin{equation}\label{Fcondition}
        \sum_{m\in\mathcal{I}}\Vert m\Vert_p\tilde{\beta}_m\leq M,
    \end{equation}
    and let $F:\ell^p(\R)\to\ell^p(\R)$ be defined pointwise by
    \begin{equation}\label{Fdef}
        F(x)=\sum_{m\in\mathcal{I}}m\beta_m(x).
    \end{equation}
    Condition \eqref{Fcondition} ensures that $F$ is well defined in $E$. Assume that $F$ is Lipschitz. 
    
    Notice that $\beta_m(\gamma(s))$ is integrable over $s\in[0,t]\subset A$ whenever $\gamma:A\subset\R\to E$ is a càdlàg function. Thus, it is well defined the quantity
    \begin{equation*}
        \int_0^tF(\gamma(s))ds:=\int_0^t\sum_{m\in\mathcal{I}}m\beta_m(\gamma(s))ds=\sum_{m\in\mathcal{I}}m\int_0^t\beta_m(\gamma(s))ds\in\ell^p(\R)
    \end{equation*}
    for $[0,t]\subset A$. Indeed,
    \begin{equation*}
        \int_0^tF(\gamma(s))ds\leq Mt.
    \end{equation*}
    
    Let $(X^n:n\geq1)$ be a sequence of stochastic processes in $E_n$ such that:
    \begin{equation}\label{Xndef}
        X^n(t)=X^n(0)+\sum_{m\in\mathcal{I}}n^{-1}mY_m\bigg(n\int_0^t\beta_m\big(X^n(s)\big)ds\bigg),
    \end{equation}
    for $t\geq0$, where $\{Y_m:m\in\mathcal{I}\}$ is a family of independent standard Poisson processes.
    \begin{thm}\label{IDLLN}
         Suppose that $X^n(0)\to X(0)\in E$ almost surely as $n\to\infty$ and define
    \begin{equation}\label{Xdef}
        X(t)=X(0)+\int_0^tF(X(s))ds.
    \end{equation}
    Then for every $t\geq0$, $\lim_{n\to\infty}\sup_{s\leq t}\Vert X^n(s)-X(s)\Vert_p\to0$ as $n\to\infty$ almost surely.
    \end{thm}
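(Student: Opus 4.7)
The plan is to adapt the classical random time change approach of Ethier and Kurtz to the $\ell^p(\R)$ setting. I would first center each Poisson process as $Y_m(u) = u + \tilde Y_m(u)$, with $\tilde Y_m$ a mean-zero càdlàg martingale, and subtract \eqref{Xdef} from \eqref{Xndef} to obtain the decomposition
$$X^n(t) - X(t) = \bigl(X^n(0)-X(0)\bigr) + M^n(t) + \int_0^t\bigl(F(X^n(s))-F(X(s))\bigr)\,ds,$$
with noise $M^n(t) := \sum_{m\in\mathcal{I}} n^{-1} m\,\tilde Y_m\!\bigl(n\!\int_0^t\!\beta_m(X^n(s))\,ds\bigr)$. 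A preliminary sanity check using $\bigl\|\sum_m a_m m\bigr\|_p \leq \sum_m |a_m|\|m\|_p$, the bound $|\tilde Y_m(u)| \leq Y_m(u) + u$, and \eqref{Fcondition} confirms that $M^n(t) \in \ell^p(\R)$ almost surely and $\mathbb{E}\|M^n(t)\|_p < \infty$.

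Next, taking $\ell^p$-norms and using the assumed Lipschitz constant $L$ of $F$ gives
$$\|X^n(t)-X(t)\|_p \leq \|X^n(0)-X(0)\|_p + \sup_{s\leq t}\|M^n(s)\|_p + L\int_0^t\|X^n(s)-X(s)\|_p\,ds,$$
and Gronwall's inequality would then yield
$$\sup_{s\leq t}\|X^n(s)-X(s)\|_p \leq \bigl(\|X^n(0)-X(0)\|_p + \sup_{s\leq t}\|M^n(s)\|_p\bigr)e^{Lt},$$
reducing the theorem to proving $\sup_{s\leq t}\|M^n(s)\|_p \to 0$ almost surely.

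The main obstacle lies in this last step: in the classical finite-dimensional Ethier--Kurtz argument one applies the strong LLN for Poisson processes coordinate-wise, but here $\mathcal{I}$ is countably infinite and the aggregated error must be controlled in $\ell^p$. My strategy is a truncation that exploits \eqref{Fcondition}. For $\varepsilon > 0$, pick a finite subset $\mathcal{I}_\varepsilon \subset \mathcal{I}$ with $\sum_{m\notin\mathcal{I}_\varepsilon}\|m\|_p\tilde\beta_m < \varepsilon$, and split
$$\sup_{s\leq t}\|M^n(s)\|_p \leq \sum_{m\in\mathcal{I}_\varepsilon}\frac{\|m\|_p}{n}\sup_{u\leq n\tilde\beta_m t}|\tilde Y_m(u)| + \sum_{m\notin\mathcal{I}_\varepsilon}\frac{\|m\|_p}{n}\sup_{u\leq n\tilde\beta_m t}|\tilde Y_m(u)|.$$
The finite head tends to $0$ almost surely by the strong LLN for Poisson processes applied to each of the finitely many $m \in \mathcal{I}_\varepsilon$. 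For the tail, using $|\tilde Y_m(u)| \leq Y_m(u) + u$ together with $\mathbb{E}\sup_{u\leq nT}Y_m(u) = nT$ bounds its expectation by $2t\sum_{m\notin\mathcal{I}_\varepsilon}\|m\|_p\tilde\beta_m < 2t\varepsilon$, uniformly in $n$. Combining this $L^1$ control with Markov's inequality and a Borel--Cantelli argument along a sufficiently sparse subsequence $n_k$ upgrades convergence in probability to almost-sure convergence; letting $\varepsilon \to 0$ concludes the proof.
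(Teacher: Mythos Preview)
Your decomposition and Gronwall reduction match the paper's proof exactly. The divergence is in showing $\sup_{s\le t}\|M^n(s)\|_p\to 0$ almost surely: the paper bounds this by $\sum_{m\in\mathcal I}\|m\|_p\, n^{-1}\sup_{u\le t}|\tilde Y_m(n\tilde\beta_m u)|$ and then interchanges limit and sum via a generalized dominated convergence (Pratt-type) step, using that the termwise dominants $\|m\|_p\, n^{-1}\bigl(Y_m(n\tilde\beta_m t)+n\tilde\beta_m t\bigr)$ converge a.s.\ to $2\|m\|_p\tilde\beta_m t$ \emph{and} that their sum converges a.s.\ to $2Mt$.

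Your truncation route has a genuine gap in the last sentence. The $L^1$ bound $\E[\text{tail}_n^\varepsilon]\le 2t\varepsilon$ is uniform in $n$, so Markov's inequality gives $\prob(\text{tail}_n^\varepsilon>\delta)\le 2t\varepsilon/\delta$, which is constant in $n$ and hence not summable along \emph{any} subsequence; Borel--Cantelli cannot be invoked as written. What your argument actually delivers is convergence \emph{in probability} of $\sup_{s\le t}\|M^n(s)\|_p$, and extracting an a.s.-convergent subsequence from that does not, on its own, yield a.s.\ convergence of the full sequence. The repair is to use structure you have already written down but not exploited: the dominant $W_n:=\sum_{m\notin\mathcal I_\varepsilon}\|m\|_p\, Y_m(n\tilde\beta_m t)$ has i.i.d.\ increments in $n$ (by stationary independent increments of the $Y_m$) with finite mean $t\sum_{m\notin\mathcal I_\varepsilon}\|m\|_p\tilde\beta_m<t\varepsilon$, so Kolmogorov's SLLN gives $W_n/n\to t\sum_{m\notin\mathcal I_\varepsilon}\|m\|_p\tilde\beta_m$ a.s.\ directly. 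This yields $\limsup_n \text{tail}_n^\varepsilon\le 2t\varepsilon$ a.s., and letting $\varepsilon\downarrow 0$ along a countable sequence finishes. Note that this same i.i.d.-increment SLLN is precisely what justifies the interchange step the paper asserts.
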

    \begin{proof}
        We follow closely the approach used in \cite{ethier2009markov}. First, we rewrite equation \eqref{Xndef} as
        \begin{equation*}
            X^n(t)=X^n(0)+\sum_{m\in\mathcal{I}}m n^{-1}\tilde{Y}_m\bigg(n\int_0^t\beta_m\big(X^n(s)\big)ds\bigg)+\int_0^tF(X^n(s))ds,
        \end{equation*}
        where for any $m$, $\tilde{Y}_m(u)=Y_m(u)-u$ is the Poisson process centered at its expectation. Now, since
        \begin{equation}\label{limsup}
            \lim_{n\to\infty}\sup_{u\leq v}|n^{-1}\tilde{Y}_m(nu)|=0\quad a.s.,\quad v\geq0,
        \end{equation}
        it suffices to show that the Poissonian contribution of the process $X^n$ converges to zero almost surely:
        \begin{equation}\label{ineqepsilon}
            \begin{array}{ccl}
            \varepsilon_n(t)&=&\sup_{u\leq v}\bigg\Vert X^n(u)-X^n(0)-\int_0^uF(X^n(s))ds\bigg\Vert_p\\
            &\leq&\displaystyle \sum_{m\in\mathcal{I}}\Vert m\Vert_p n^{-1}\sup_{u\leq t}|\tilde{Y}_m(n\tilde{\beta}u)|\\
            &\leq&\displaystyle \sum_{m\in\mathcal{I}}\Vert m\Vert_pn^{-1}(Y_m(n\tilde{\beta}_m t)+n\tilde{\beta}_mt).
            \end{array}
        \end{equation}
        By the law of large numbers for Poisson processes,
        \begin{eqnarray*}
            \lim_{n\to\infty}\sum_{m\in\mathcal{I}}\Vert m\Vert_pn^{-1}(Y_m(n\tilde{\beta}_m t)+n\tilde{\beta}_m t)&=&\sum_{m\in\mathcal{I}}2\Vert m\Vert_1\tilde{\beta}_{m} t\\
            &=&\sum_{m\in\mathcal{I}}\lim_{n\to\infty}\Vert m\Vert_1n^{-1}(Y_m(n\tilde{\beta}_m t)+n\tilde{\beta}_m t).
        \end{eqnarray*}
        That is, the limit and the summation can be interchanged. Using this in inequality \eqref{ineqepsilon} and applying \eqref{limsup}, it follows that
        \begin{equation}\label{en0}
            \lim_{n\to\infty}\varepsilon_n(t)=0\quad a.s.
        \end{equation}

        Observe that
        \begin{equation*}
            \Vert X^n(t)-X(t)\Vert_p\leq\Vert X^n(0)-X(0)\Vert_p+\varepsilon_n(t)+\int_0^tM\Vert X^n(s)-X(s)\Vert_p ds.
        \end{equation*}
        Applying Gronwall's inequality to expression above, it yields:
        \begin{equation*}
            \Vert X^n(t)-X(t)\Vert_p\leq(\Vert X^n(0)-X(0)\Vert_p+\varepsilon_n(t))\exp(Mt).
        \end{equation*}
        Since $X^n(0)\to X(0)$ almost surely and equation \eqref{en0} holds, we get that, for every $t\geq0$,
        \begin{equation}
            \lim_{n\to\infty}\sup_{s\leq t}\Vert X^n(s)-X(s)\Vert_p=0\quad a.s.
        \end{equation}
    \end{proof}

\subsection{Proof of Theorem \ref{LLN}}\label{proof2.1}
    Let
    \begin{equation*}
        E_+=\{x=(x_0,x_1,x_2,\dots)\in\ell_1, \Vert x\Vert_1\leq1, x_i\geq0,\forall i\geq0\}.
    \end{equation*} 
    Note that $E_+\subset E:= B_2(0)$ the open ball of radius $2$ centered at the origin in $\ell_1$. We consider the initial condition $x^0\in E_+$.
    
    Applying Theorem \ref{IDLLN} to the sequence of infinite-dimensional Markov chains $n^{-1}\RM^n$ in $E$, where $\RM^n$ is the continuous-time Markov chain with transitions and rates given by equation \eqref{ratesMTRA}, we obtain a limit process 
    
    \begin{equation*}
        R(t)=\lim_{n\to\infty}\frac{1}{n}\RM^n(t)\quad a.s.,
    \end{equation*}
    for any $t\geq0$. 
    
    We will study the proportions of the limit process when it dies, that is, when the limit proportion of spreader individuals is zero. First, observe that $Y^n\geq0$, so $\int_0^tY^n(s)ds$ is non-decreasing in $t$. Let the absorption time of the process $\tau_n:=\inf\{t\geq0:Y^n(t)=0\}$ and define the accelerated time:
    
    \begin{equation*}
        \gamma^n(t):=\inf\left\{u\in[0,\tau_n):\int_0^uY^{n}(s)ds>t\right\},\quad0\leq t\leq \int_0^\infty Y^{n}(s)ds.
    \end{equation*}
    
    It follows that $\int_0^{\gamma^n(t)}Y^n(s)ds=t$. We let the time-changed continuous-time Markov chain $\{\tilde{\RM}(t)\}_{t\geq0}$ given by
    \begin{equation*}
        \tilde{\RM}^n(t):=\RM^n(\gamma^n(t)),\quad \forall t\geq0.
    \end{equation*}
    Thus, $\tilde{\RM}^n$ has transitions and rates:
    \begin{equation}
        \begin{array}{lll}
             \text{Transition}&\text{Rate}&  \\
            m_{1}=-e_{1} & p_0\tilde{X}_1,& \\
            m_{2}=-e_{1}+e_{2} & (1-p_0-p_1)\tilde{X}_1,&\\
            m_{-i}=-e_{i}+e_{0} & q_i\tilde{X}_i, & i=1,\dots\\
            m_{i}=-e_{i-1}+e_{i}& (1-q_i)\tilde{X}_i, & i=3,\dots\\
            m_{0}=-e_{0} & \left(n-1-\sum_{i=1}^{\infty}\tilde{X}_i\right).&
        \end{array}
    \end{equation}
    Let the accelerated absorption time $\tilde{\tau}_n:=\inf\{t\geq0:\tilde{Y}^n(t)\leq0\}$ and notice that
    \begin{equation*}
        \RM^n(\tau_n)=\tilde{\RM}^n(\tilde{\tau}_n).
    \end{equation*}

    Now, consider the functions
    \begin{align*}
            &\beta_{m_1}(x_0,x_1,x_2,\dots)=p_0x_1, \\
            &\beta_{m_2}(x_0,x_1,x_2,\dots)=(1-p_0-p_1)x_1, \\
            &\beta_{m_{-i}}(x_0,x_1,x_2,\dots)=q_{i}x_i,\quad i=1,\dots\\
            &\beta_{m_{i}}(x_0,x_1,x_2,\dots)=(1-q_i)x_i,\quad  i=3,\dots\\
            &\beta_{m_0}(x_0,x_1,x_2,\dots)=\left(1-\sum_{i=1}^{\infty}x_i\right),
    \end{align*}
    and set $F:\ell_1\to\ell_1$ to be the map $x\mapsto\sum_{i\in\Z}\beta_{m_i}(x)$.

    In order to obtain the limits, we define the family of operators ${G=(G_t:t\geq0)}$, $G_t:E\to\ell_1$, with $G_0(x)=x$ and, for $t>0$,
    \begin{equation}
        G_t(x)=x+\int_0^tF(G_s(x))ds,
    \end{equation}
    for $x\in \ell_1$. 
    
    For a fixed initial condition $x^0\in E_+$, let 
    \begin{equation*}
        \tilde{\tau}_\infty=\inf\{t\geq0:G_t(x^0)\notin E_+\}
    \end{equation*}
    and consider 
    \begin{equation*}
        x^\infty=G_{\tilde{\tau}_\infty}(x^0).
    \end{equation*}

    Indeed, the definition of $F$ ensures that $\tau_\infty<\infty$ and, consequently, $x^\infty$ is well defined. This fact is established later in the proof.
    
    Whenever $x^0\in E_+$, $G_t(x^0)$ is the limit process for $n^{-1}\tilde{\RM}^n(t)$ with initial condition $x^0$ by Theorem \ref{IDLLN}. Moreover,
    \begin{equation*}
        \lim_{n\to\infty}\tilde{\tau}_n=\tilde{\tau}_\infty.
    \end{equation*}
    
    Thus, it suffices to show that for $x^0\in E_+$,
    \begin{equation}\label{E0}
        \tilde{\tau}_\infty=\inf\{t>0: G_t(x^0)\in E_0\},
    \end{equation}
    where 
    \begin{equation*}
        E_0=\{x\in E_+:x_0=0\}.
    \end{equation*}
    
    Now observe that for a fixed initial condition $x^0$, the coordinates $x^0_1,x^0_2,\dots,x^0_i$ fully determine the trajectory of the $i$-th coordinate for $i\geq1$. Indeed,
    \begin{equation}
        x_1(t)=x^0_1-\int_0^tx_1(s)ds,
    \end{equation}
    which has solution $x_1(t)=x^0_1\exp(-t)$,
    \begin{equation}
        x_2(t)=x^0_2+\int_0^t(1-p_1-p_0)x_1(s)-x_2(s)ds,
    \end{equation}
    which has solution $x_2(t)=x^0_2\exp(-t)+(1-p_0-p_1)x^0_1t\exp(-t)$, and for $i\geq2$,
    \begin{equation}\label{isystem}
        x_{i+1}(t)=x^0_{i+1}+\int_0^t(1-q_i)x_i(s)-x_{i+1}(s)ds.
    \end{equation}
    To solve the above equation, we rewrite it as an ordinary differential equation
    \begin{equation}\label{iisystem}
        x_{i+1}'=(1-q_i)x_i-x_{i+1},
    \end{equation}
    with initial condition $x_{j}(0)=x^0_j$ for $1\leq j\leq i$.
    
    Then, it is straightforward that whenever $x^0\in E_+$, $x_i(t)\geq0$ for any $t\geq0$ and $i\geq1$. Moreover, if $x(t)\in E_+$ and $x(t+\delta)\notin E_+$ for any $\delta>0$ then $x(t)\in E_0\subset E_+$. 
    
    To finish the proof, we show that $x(t)\in E_0$ implies $x(t+\delta)\notin E_+$ for any $\delta>0$ and $t>0$. Indeed, since $0\leq q_i\leq 1$ for any $i\geq2$, we have that
    \begin{eqnarray}
        \sum_{i=1}^{\infty}\frac{dx_i}{dt}
        &=&-x_1+((1-p_0-p_1)x_1-x_2)\nonumber\\
        &&+\sum_{i\geq3}((1-q_{i-1})x_{i-1}-x_i)\nonumber\\
        &=&-x_1(p_0+p_1)-\sum_{i\geq2}q_ix_i\label{dxdt}\\
        &\leq&0.\nonumber
    \end{eqnarray}
    Then 
    \begin{equation*}
        \frac{d}{dt}\sum_{i=1}^{\infty}x_i=\sum_{i=1}^{\infty}\frac{dx_i}{dt}
    \end{equation*}
    is well defined. Moreover, given $x^0\in E_+$, $\sum_{i=1}^{\infty}x_i(t)\leq1$ for any $t\geq0$. It follows that
    \begin{equation}\label{eqy}
        x_0(t)=C+\int_0^t\Big(-1+\sum_{i=1}^{\infty}(1+q_i)x_i(s)\Big)ds
    \end{equation}
    is finite, therefore it is well defined. Here, $C$ is such that $x_0(0)=y(0)$.
    
    In order to prove that $\tilde{\tau}_\infty$ is finite, we need to show that the sum inside the integral in equation \eqref{eqy} is bounded by a constant $c<1$ for any $s$ sufficiently large. 

    Since $x_i(s)\geq0$ for any $s\geq0$, it holds
    \begin{equation*}
        \int_0^\infty x_i(s)ds\geq\int_0^tx_i(s)ds.
    \end{equation*}
    From equation \eqref{isystem} (or equivalently equation \eqref{iisystem}), we obtain
    \begin{equation}\label{xn}
    \begin{array}{ccc}
        x_i(t)&=&\exp(-t)\bigg[x_1^0\frac{t^{i-1}}{(i-1)!}\Big(1-\sum_{j=0}^{i-1}p_j\Big)\\
        &&+\displaystyle\sum_{k=2}^{i}\frac{x_k^0}{(i-k)!}t^{i-k}\prod_{j=k}^{i-1}(1-q_j)\bigg].
    \end{array}
    \end{equation}
    Since $x^0\in E_+$,
    \begin{eqnarray*}
        \int_0^\infty x_{i}(s)ds&=&\sum_{k=1}^{i}x_k^0\prod_{j=k}^{i}(1-q_j)\\
        &=&\sum_{k=1}^{i}x_k^0\frac{\sum_{j\geq i+1}p_j}{\sum_{j\geq k}p_j}\\
        &\leq&(1-p_0)^{-1}\sum_{j\geq i+1}p_j.
    \end{eqnarray*}

    That is, it attains its maximum for initial condition $x^0=e_1=(0,1,0,\dots)$, so assume the initial condition $e_1$.
    Let $\varepsilon>0$ and let $i_0$ be an integer such that 
    \begin{equation*}
        \sum_{j=1}^{i}p_j\geq1-\varepsilon,
    \end{equation*}
    for any $i\geq i_0$. Observe that
    \begin{eqnarray*}
        \sum_{i=1}^{\infty}(1+q_i)x_i(t)        &\leq&\sum_{i=1}^{i_0}(1+q_i)\exp(-t)\frac{t^{i-1}}{(i-1)!}\Big(1-\sum_{j=0}^{i-1}p_k\Big)\\
        &&+2\varepsilon\exp(-t)\sum_{i=i_0+1}^{\infty}\frac{t^{i-1}}{(i-1)!}
    \end{eqnarray*}
    The first term on right hand side is a product between an exponential and a polynomial so there is $T=T(\varepsilon)$ such that for any $t\geq T$,
    \begin{equation*}
        \exp(-t)\sum_{i=1}^{i_0}(1+q_i)\frac{t^{i-1}}{(i-1)!}\Big(1-\sum_{k=i+1}^{\infty}p_k\Big)<\varepsilon.
    \end{equation*}
    The second term is bounded by
    \begin{equation*}
        2\varepsilon\exp(-t)\sum_{i=0}^{\infty}\frac{t^{i}}{i!}=2\varepsilon
    \end{equation*}

    Now choose $\varepsilon$ so that $3\varepsilon<1$ and the proof is finished.

    \subsubsection{Obtaining the absorption time}

    To obtain $\tilde{\tau}_\infty$, note that
    \begin{eqnarray*}
        \int_0^tx_n(s)ds&=&\int_0^t\exp(-t)\bigg[x_1^0\frac{t^{n-1}}{(n-1)!}\Big(1-\sum_{j=0}^{n-1}p_j\Big)\\
        &&+\sum_{k=2}^{n}\frac{x_k^0}{(n-k)!}t^{n-k}\prod_{j=k}^{n-1}(1-q_j)\bigg]ds\\
        &=&x_1^0\frac{\gamma(n,t)}{\Gamma(n)}\Big(1-\sum_{j=0}^{n-1}p_j\Big)+\sum_{k=2}^{n}x_k^0\frac{\gamma(n-k+1,t)}{\Gamma(n-k+1)}\prod_{j=k}^{n-1}(1-q_j).
    \end{eqnarray*}

    Applying it to equation \eqref{eqy}, and recalling that $y(\tilde{\tau}_\infty)=0$, equation \eqref{eqtau} is obtained.

\subsection{Proof of Theorem \ref{Maxprop}}\label{proof2.2}
Observe that the time-changed continuous-time Markov Chain $\{\tilde{\RM}^n(t)\}_{t\geq0}$ has the same proportions of the original continuous-time Markov Chain $\{\RM^n(t)\}_{t\geq0}$ for a different time scale, then the maximal proportions in the sense of Theorem \ref{Maxprop} of the original process may be characterized through the accelerated process, provided that there is uniqueness (and existence) of the limit time $\tilde{\tau}_{\infty}$.

Recall that
\begin{equation*}
    x_{0}'(t)=-1+\sum_{i=1}^{\infty}(1+q_i)x_i(t)
\end{equation*}

Also, we have shown that there exists $T>0$ depending on $\varepsilon>0$ and on the distribution such that 
\begin{equation*}
    \sum_{i=1}^{\infty}(1+q_i)x_i(t)\leq c<1
\end{equation*}
for any $t\geq T$.  

By hypothesis $x_{0}'(0)>0$, and the series is absolutely convergent, then there is $\zeta\in(0,\tilde{\tau}_\infty)$ such that $x_{0}'(\zeta)=0$ and $x_{0}(\zeta)=\sup_{t\in(0,\tau_{\max})}x_{0}(t)$. For the second part, notice that since $x_i(t)=\exp(-t)p_i(t)$ with $p_i$ being a polynomial of degree $i-1$, we study the signs of
\begin{equation}\label{eqimport}
 -\exp(t)+\sum_{i=1}^{\infty}(1+q_i)p_i(t).
\end{equation}

We may write the sum as
\begin{eqnarray*}
    p(t):=\sum_{i=1}^{\infty}(1+q_i)p_i(t)&=&\sum_{i=1}^{\infty}x_i^0(1+q_i)\\
    &&+\sum_{\ell=1}^{\infty}\frac{t^{\ell}}{\ell!}\bigg[\Big(1-\sum_{j=0}^{\ell}p_j\Big)(1+q_{\ell+1})x^0_1\\
    &&+\sum_{k=2}^{\infty}(1+q_{\ell+k})x^0_k\prod_{m=1}^{\ell}(1-q_{k+m})\bigg]\\
    &&=\sum_{i=0}^{\infty}a_i\frac{t^i}{i!}.
\end{eqnarray*}

Notice that for $\ell\geq1$,
\begin{equation*}
    (1+q_{\ell+k})\prod_{m=1}^{\ell}(1-q_{k+m})\leq1,
\end{equation*}
with equality if and only if $p_{\ell+r}=0$ for any $r\geq1$, and
\begin{equation}\label{pjcondition}
    \Big(1-\sum_{j=0}^{\ell}p_j\Big)(1+q_{\ell+1})=1+p_{\ell+1}-\sum_{j=0}^{\ell}p_j,
\end{equation}
so there is an integer $k$ such that $a_i<1$ for any $i\geq k$. Then
\begin{equation}\label{derivativexp}
    \exp(t)\geq\frac{d^ip}{dt^i}(t)
\end{equation}
for any $t\geq0$ and $i\geq k$. Since $\exp(t)$ is equal to all its derivatives, it follows that $p(t)-\exp(t)$ changes sign at most $k-1$ times between $0$ and $\tau_{\infty}$. Moreover, there is a global maximum in the interval $[0,\tau_{\infty{}}]$, say $\tilde{\tau}_{\max{}}$. The proof that $\tilde{\tau}_{\max{}}^{(n)}\to\tilde{\tau}_{\max{}}:=\zeta$ follows from Theorem \ref{IDLLN}. 

For the last part of the proof, note that the conditions $p_{m+1}\geq\sum_{j=0}^{m}p_j$ for any $m\leq m_0$ and $p_{m}=0$ for any $m\geq m_0+1$ guarantee that the $m$-th derivative of $y$ satisfies $y^{(m)}\geq y^{(m+1)}$ for any $m$, so the uniqueness of the maximum of $y$ is established.

Also, note that $p_{m+1}\leq\sum_{j=0}^{m}p_j$ for any $m\geq0$ with equation \eqref{pjcondition} also establishes the uniqueness of the maximum and the proof is complete.

\section*{Acknowledgements}
This study was financed in part by the Coordenação de Aperfeiçoamento de Pessoal de Nível Superior - Brasil (CAPES) - Finance Code 001. It was also supported by grants 2022/08948-2, 2023/13453-5 and 2025/02013-0 S\~ao Paulo Research Foundation (FAPESP). D.A.L. thanks Daniel Miranda (UFABC) for illuminating discussions.

\bibliographystyle{siam}
\bibliography{Biblio.bib}

\begin{description}
  \item[Cristian F. Coletti and Alejandra Rada] Centro de Matemática, Computação e Cognição - Universidade Federal do ABC  \\
        Santo André, SP, Brazil \\
        E-mails: \texttt{cristian.coletti@ufabc.edu.br}, \texttt{alejandra.rada@ufabc.edu.br}
        
  \item[Denis A. Luiz] Instituto de Matemática, Estatística e Computação Científica - Universidade Estadual de Campinas \\
        Campinas, SP, Brazil \\
        \texttt{denisalu@ime.unicamp.br}
\end{description}

\end{document}